\documentclass{article}
\title{A counterexample to Kanalas' problem of continuously realising types}
\author{Morgan Rogers\footnote{Université Sorbonne Paris Nord, LIPN, CNRS, UMR 7030, F-93430, Villetaneuse, France, \texttt{rogers@lipn.univ-paris13.fr}} \ and 
	Joshua Wrigley\footnote{Department of Mathematics and Statistics, Faculty of Sciences, Masaryk University, Kotlářská 2, 611 37 Brno, Czech Republic, \texttt{wrigley@math.muni.cz}}}
\date{}

\usepackage{amsmath,amsthm,amssymb}
\usepackage{tikz-cd}
\usepackage{quiver}
\usepackage{thmtools}
\usepackage[hidelinks]{hyperref}

\usepackage{enumitem}
\setlist{listparindent = \parindent, parsep=0pt,}
\setenumerate[1]{label = (\roman*), ref = (\roman*)}

\usepackage[noabbrev,capitalise]{cleveref}
\crefformat{equation}{(#2#1#3)}
\crefformat{enumi}{#2#1#3}
\crefformat{enumii}{#2#1#3}

\usepackage[a4paper,
left=1.6in,
right=1.6in,
top=1in,
bottom=1in
]{geometry}

\theoremstyle{plain}
\newtheorem{thm}{Theorem}[section]

\newtheorem{coro}[thm]{Corollary}
\newtheorem{prop}[thm]{Proposition}
\newtheorem{question}[thm]{Question}

\theoremstyle{definition}
\newtheorem{df}[thm]{Definition}

\renewcommand{\phi}{\varphi}
\newcommand{\cat}{\mathcal{C}}
\newcommand{\dcat}{\mathcal{D}}
\newcommand{\pretopos}{\mathcal{P}}
\newcommand{\Sh}{\mathsf{Sh}}
\newcommand{\topos}{\mathcal{E}}
\newcommand{\ftopos}{\mathcal{F}}
\newcommand{\Set}{\mathbf{Set}}
\newcommand{\locrefl}{\mathcal{L}}
\newcommand{\theory}{\mathbb{T}}
\newcommand{\opens}{\mathcal{O}}
\newcommand{\op}{^{\mathrm{op}}}

\usepackage[
backend=biber,
sorting=nyt,
style=ieee
]{biblatex}
\begin{document}
	
	\maketitle
	
	\begin{abstract}
		We give an example of a coherent theory, a topological space, and a continuous assignment of the (model-theoretic) types of the theory to the points of the space, such that there is no sheaf model of the theory over the space with the property that the fibre over a point, as a set-based model, realises the corresponding type; thus, we answer in the negative a problem posed by Krist\'of Kanalas.
	\end{abstract}
	
	\renewcommand{\thefootnote}{\fnsymbol{footnote}} 
	\footnotetext{
		\emph{Key words:} Realising types, sheaf model, localic reflection.
	}
	\footnotetext{
		\emph{2020 Mathematics subject classification:} 03C90, 03G30, 18F10, 54B40.
	}     
	\renewcommand{\thefootnote}{\arabic{footnote}} 
	
	\section{Introduction}
	In a recent paper \cite[Question 3.6]{kanalas_apal}, Krist\'of Kanalas posed the question: 
	\begin{question}\label{question_1}
		Does every Grothendieck topos $\topos$ `realise types' of coherent theories?
	\end{question}
	We will answer this question in the negative using a finite counterexample.  First, let us explain the idea of `realising types' in a sheaf topos, and why this question is a natural one in the context of sheaf semantics of first-order theories.
	
	Let $\theory$ be a coherent theory (\cite[Definition D1.1.6]{elephant}, also called a \emph{positive theory} by some).  Recall that the sentences of $\theory$, i.e.\ the coherent formulae with no free variables, naturally form a distributive lattice $\dcat$.  Let $S_0(\theory)$ denote the dual spectral space of $\dcat$.  A point $p$ of $S_0(\theory)$ is called a positive $0$-type over the empty parameter set by model theorists.  Recall that the point $p$ corresponds to a \emph{prime filter} of $\dcat$; by completeness of $\theory$ with respect to set-based models, we may also equate the point $p$ with the set of all the sentences satisfied by some set-based model $M$ of $\theory$.  That is, $p$ describes a model up to (positive) elementary equivalence.
	
	Since the only types we will encounter in this paper are the positive $0$-types over the empty parameter set, we will use the word `type' to exclusively refer to these.
	\begin{df}\label{df:realise}
		We say that a set-based model $M$ \emph{realises} a type $p$ if the set of sentences true in $M$ {contains} $p$.  (Note that we are asking for containment, and not equality, of sets of sentences: the model $M$ can satisfy more sentences than those contained in $p$.)
	\end{df}

	Now suppose we have a topological space $X$, which for convenience we assume is \emph{sober}, and to each point $x \in X$ we associate a type in a continuous fashion, i.e.\ we are given a continuous function $q \colon X \to S_0(\theory)$.  A very reasonable question is if we can associate a model $M_x$ to each point $x \in X$, also in a continuous fashion, such that the model $M_x$ realises the type $q(x) \in S_0(\theory)$. In this instance, `continuous fashion' means that the models $M_x$ are the fibres of a local homeomorphism $M \to X$, and moreover every $n$-ary definable relation is an open subset of the corresponding $n$-fold pullback $M \times_X \dots \times_X M$.  In other words, there is a model $M$ \emph{internal} to the topos of sheaves $\Sh(X)$ on $X$ such that, for each $x \in X$, the fibre $M_x$ as a model realises the type $q(x)$ (see \cite[\S D1.2]{elephant} for more on internal models).  In the talks \cite{kanalas_talk_3,kanalas_talk_2}, Kanalas motivated this question as a form of `compactness theorem' for sheaf topoi.
	
	We now reformulate the question in the language of categorical logic, which is the way that the question is formulated in \cite{kanalas_apal} or \cite[Defintion 8.8]{kanalas_phd}.  The coherent theory $\theory$ can be replaced by its \emph{classifying pretopos} $\pretopos$ (as in \cite[\S 8.4]{makkai_reyes}), and the distributive lattice $\dcat$ of sentences becomes the lattice of subterminals of $\pretopos$.  By sobriety, the continuous assignment of types $q \colon X \to S_0(\theory)$ is equivalent in data to a homomorphism of distributive lattices $\dcat \to \opens(X)$ to the lattice of open subsets of $X$, or equivalently a coherent functor $\dcat \to \Sh(X)$.  A model of $\theory$ internal to $\Sh(X)$ is equivalent in data to a coherent functor $\pretopos \to \Sh(X)$.  The question of whether the topos $\Sh(X)$ realises types for the theory $\theory$ thus becomes:
	
	\begin{question}\label{question_2}
		For every coherent functor $Q \colon \dcat \to \Sh(X)$, does there exist a coherent functor $M \colon \pretopos \to \Sh(X)$ and a natural transformation
		\[\begin{tikzcd}
			\dcat & \\
			\pretopos & {\Sh(X)} \, ?
			\arrow[hook, from=1-1, to=2-1]
			\arrow[""{name=0, anchor=center, inner sep=0}, "Q", from=1-1, to=2-2]
			\arrow["M"', dashed, from=2-1, to=2-2]
			\arrow["\Rightarrow"{marking, allow upside down}, draw=none, from=0, to=2-1]
		\end{tikzcd}\]
	\end{question}
	
	The existence of the natural transformation $Q \Rightarrow M|_{\dcat}$ expresses that the model at a point $x \in X$ realises the type $q(x)$.
	\begin{df}[Definition 3.1 \cite{kanalas_apal}]
		If the topos $\Sh(X)$ has the above property for every pretopos, we say that $\Sh(X)$ \emph{realises types}.
	\end{df}
	By replacing the topos $\Sh(X)$ with an arbitrary Grothendieck topos $\topos$, we can just as easily ask if $\topos$ realises types.  We now have enough terminology to appreciate Kanalas' problem: {\it does every topos $\topos$ realise types for every pretopos?}

	Kanalas has demonstrated that certain special classes of topoi do realise types: if $\topos$ is the topos of sheaves on a complete Boolean algebra with the coherent topology \cite[Theorem 3.5]{kanalas_phd}, or if $\topos$ is the topos of presheaves on well-founded tree \cite{kanalas_talk_2}.

	We will demonstrate a counterexample to \cref{question_1} and \cref{question_2} by constructing the following:
	\begin{thm}
		There is a poset $P\cat$ and a (related) coherent theory $\mathbb{T}$ such that the topos of co-presheaves $[P\cat, \Set]$ does not realise types for $\mathbb{T}$.
	\end{thm}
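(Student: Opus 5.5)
The plan is to take $P\cat$ finite and exploit the concrete description of $[P\cat,\Set]$. This topos is $\Sh(X)$ for the Alexandrov space of up-sets of $P\cat$, and its points are the elements of $P\cat$. A coherent functor $M\colon\pretopos\to[P\cat,\Set]$ is therefore the same as a family of set-based models $M_p$, for $p\in P\cat$, together with $\theory$-model homomorphisms $M_p\to M_{p'}$ for $p\le p'$ that compose strictly. A coherent functor $Q\colon\dcat\to[P\cat,\Set]$ is an order-preserving assignment $p\mapsto q(p)$ of types, meaning $q(p)\subseteq q(p')$ for $p\le p'$. Realising types means that each $M_p$ satisfies every sentence of $q(p)$. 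The assignment $q$ only constrains each fibre separately, while $M$ also demands a coherent system of homomorphisms. So the counterexample should be a situation where each $q(p)$ is realisable on its own, but every choice of realisers carries a ``monodromy'' around a cycle in $P\cat$ that cannot be trivialised.

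For the poset I would take the crown $P\cat=\{a_1,a_2,b_1,b_2\}$ with $a_i<b_j$ for all $i,j$. Its underlying graph is a $4$-cycle, so it has nontrivial ``$\pi_1$'', while there are no composites to check. The theory would have 0-ary propositions $\beta_1,\beta_2$ (so $\dcat$ contains them), unary predicates $U,V$, and a binary predicate $D$ with $D(x,x)\vdash\bot$, so that $D$ serves as a positive substitute for $\neq$. It would also contain axioms $\top\vdash\exists x\,U(x)$ and $\top\vdash\exists y\,V(y)$, functionality axioms $U(x)\wedge U(y)\vdash x=y$ (and likewise for $V$), and $U(x)\wedge V(y)\vdash D(x,y)$. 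Thus every model has a unique $U$-point and a unique $V$-point, and these are distinct. Since homomorphisms preserve $U$, $V$ and $D$, they are then forced to send the $U$-point to the $U$-point and the $V$-point to the $V$-point.

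The propositions $\beta_j$ would be made to force extra structure: say $\beta_1$ adds a relation $S$ that must be an involution swapping the $U$- and $V$-points on some auxiliary sort, and $\beta_2$ adds a relation that must fix them. The types are then $q(a_i)$ containing neither $\beta$, $q(b_1)\ni\beta_1$ and $q(b_2)\ni\beta_2$. The auxiliary data at the two bottom points must be glued through $b_1$ in a twisted way and through $b_2$ in an untwisted way. Going around the cycle $a_1\to b_1\leftarrow a_2\to b_2\leftarrow a_1$ then forces the homomorphisms to identify a $U$-point with a $V$-point, which contradicts $D(x,x)\vdash\bot$. The key steps, in order, are as follows.

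\begin{enumerate}
\item Check that each $q(p)$ is a prime filter and that $q$ is monotone. This is done by exhibiting explicit finite set models in which $\beta_1$ holds without $\beta_2$, $\beta_2$ holds without $\beta_1$, and neither holds.
\item Show that any realising family $(M_p)$ with homomorphisms is pinned down on the relevant elements by the uniqueness axioms.
\item Chase the elements around the cycle to derive the contradiction.
\end{enumerate}

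I expect the main obstacle to be the design of the auxiliary ``twist'' data, because it must satisfy two competing requirements. First, the twist has to be expressible coherently as a property of the single model $M_{b_j}$. Second, it must genuinely constrain the homomorphisms from the $M_{a_i}$, which can be arbitrary, and in particular need not be injective or surjective. There is a real danger that a realiser at $b_j$ could absorb the twist, for instance by identifying elements or by containing extra elements. What I would try first is to make every relevant element definable by a unique-existence axiom, so that homomorphisms are completely determined on them. The contradiction is then forced purely by preservation of definable points, and the only remaining work is the consistency check in step (i).
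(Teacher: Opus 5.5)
\textbf{There is a genuine gap.} It lies in the choice of poset, not in the design of the twist. Over the crown $\{a_1,a_2,b_1,b_2\}$, and indeed over any poset with no chain $x<y<z$, types are realised for \emph{every} coherent theory. So your step (iii) cannot succeed, whatever auxiliary data you choose.

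The crown has no composable pair of non-identity arrows. A model in its presheaf topos is therefore just four set-models and four homomorphisms $f_{ij}\colon M_{a_i}\to M_{b_j}$, subject to no equations. Your cycle $a_1\to b_1\leftarrow a_2\to b_2\leftarrow a_1$ runs against two arrows and is not a composite, so no ``monodromy'' is ever compared.

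Concretely, suppose $q$ is monotone. Take $M_{a_i}$ whose set of true coherent sentences is exactly $q(a_i)$; this is possible since $q(a_i)$ is prime, by completeness and compactness. Let $M_{b_j}$ be a model of $\theory\cup q(b_j)\cup\mathrm{Diag}^{+}(M_{a_1})\cup\mathrm{Diag}^{+}(M_{a_2})$, using disjoint new constants for the two positive diagrams. This theory is consistent. A finite inconsistency would give $\psi\in q(b_j)$ and sentences $\exists\bar x\,\phi_1$, $\exists\bar y\,\phi_2$ true in $M_{a_1}$ and $M_{a_2}$, hence lying in $q(a_1),q(a_2)\subseteq q(b_j)$, with $\psi\wedge\exists\bar x\,\phi_1\wedge\exists\bar y\,\phi_2\vdash_{\theory}\bot$. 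That contradicts properness of $q(b_j)$. The constants then define the $f_{ij}$, and this is already a realising model.

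You can also see the failure directly in your own design. Every homomorphism sends the $U$-point to the $U$-point and the $V$-point to the $V$-point, so there is nothing left to twist. The ``absorption'' you flag at the end is not a risk but a certainty.

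\textbf{The missing idea} is that the obstruction must come from \emph{functoriality}, i.e.\ from an equation $M(y\le z)\circ M(x\le y)=M(x\le z)$. So the poset needs chains of length three, and the theory needs homomorphisms rigid enough to force two such composites apart.

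That is what the paper does. $\cat$ is a finite Cauchy-complete category with a non-commuting square $\mu\circ\kappa\neq\nu\circ\lambda$ lying over the commuting square $A<B<E$, $A<D<E$ of $P\cat$. The theory $\theory$ is the theory of flat functors on $\cat\op$, classified by $[\cat,\Set]$, whose localic reflection is $[P\cat,\Set]$. Realising types would give a lax section of this reflection. By Cauchy completeness, such a section is a functor $\theta\colon P\cat\to\cat$ with $X\le\eta\theta(X)$. The maximal elements $C,E,F$ force $\theta$ to be the identity on objects; this is why $C$ and $F$ are there. Functoriality on the square then forces $\mu\circ\kappa=\nu\circ\lambda$, a contradiction.
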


	First, we will reformulate Kanalas' problem to an equivalent statement which first engendered our doubts to its validity.  Recall that each pretopos $\pretopos$ has a \emph{coherent} classifying topos $\topos$, so that geometric morphisms $\ftopos \to \topos$ correspond to coherent functors $\pretopos \to \ftopos$ (\cite[\S 8]{makkai_reyes}).  Under this correspondence, the identity on $\topos$ corresponds to the inclusion $\pretopos \hookrightarrow \topos$.  The coherent topos $\topos$ has a \emph{localic reflection} $\locrefl$ (\cite[\S A4.6]{elephant}), which classifies the lattice $\dcat$ of subterminals of $\pretopos$, in the sense that geometric morphisms $\ftopos \to \locrefl$ correspond to coherent functors $\dcat \to \ftopos$.  The canonical hyperconnected morphism $h \colon \topos \to \locrefl$ of the localic reflection corresponds to the inclusion $\dcat \hookrightarrow \pretopos \hookrightarrow \topos$.
	\begin{prop}
		Let $\pretopos$ be a pretopos with classifying topos $\topos$.  Every Grothendieck topos realises types for $\pretopos$ if and only if the localic reflection $h \colon \topos \to \locrefl$ admits a lax section, as in the following diagram.
		\[\begin{tikzcd}
			\locrefl & \topos \\
			& \locrefl
			\arrow[dashed, from=1-1, to=1-2]
			\arrow[""{name=0, anchor=center, inner sep=0}, equals, from=1-1, to=2-2]
			\arrow["h", from=1-2, to=2-2]
			\arrow["\Rightarrow"{marking, allow upside down}, draw=none, from=0, to=1-2]
		\end{tikzcd}\]
	\end{prop}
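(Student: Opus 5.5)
Both directions come from translating the statement through the universal properties of $\topos$ and $\locrefl$. The translation is a $2$-categorical correspondence: for a Grothendieck topos $\ftopos$, sending a geometric morphism to its inverse image restricted to $\pretopos$ gives an equivalence of categories
\[\mathbf{Geom}(\ftopos,\topos) \simeq \mathbf{Coh}(\pretopos,\ftopos).\]
Here $\mathbf{Geom}(\ftopos,\topos)$ has geometric transformations between inverse images as morphisms, and $\mathbf{Coh}(\pretopos,\ftopos)$ has natural transformations as morphisms. In the same way, geometric morphisms $\ftopos \to \locrefl$ correspond to coherent functors $\dcat \to \ftopos$. Under these correspondences, precomposing with the hyperconnected $h$ becomes restriction along $\dcat \hookrightarrow \pretopos$, because $h^*$ restricted to $\dcat$ is the inclusion $\dcat \hookrightarrow \topos$.

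\emph{(If.)} Suppose $s \colon \locrefl \to \topos$ is given with a $2$-cell between $\mathrm{id}_\locrefl$ and $hs$, oriented as in the diagram; on inverse images this is a natural transformation $\alpha$ between $\mathrm{id}$ and $s^*h^*$. Let $\ftopos$ be any Grothendieck topos and $Q \colon \dcat \to \ftopos$ a coherent functor, i.e.\ a geometric morphism $g \colon \ftopos \to \locrefl$. Put $M := g^* s^* |_{\pretopos}$, which is a coherent functor because it is the inverse image of $sg \colon \ftopos \to \topos$. Then $M|_\dcat = g^* s^* h^*|_{\dcat}$. Whiskering $\alpha$ by $g^*$ and restricting to $\dcat$ gives a natural transformation between
\[Q = g^*|_\dcat \quad\text{and}\quad M|_\dcat .\]
The main thing to check here is that the direction of $\alpha$ matches $Q \Rightarrow M|_\dcat$. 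I will fix the convention for the lax $2$-cell so that its inverse-image component is $\mathrm{id} \Rightarrow s^*h^*$, restricted to $\dcat$. Read this way the diagram says precisely that.

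\emph{(Only if.)} Take $\ftopos = \locrefl$ and $Q$ the coherent functor $\dcat \to \locrefl$ corresponding to $\mathrm{id}_\locrefl$, namely the canonical identification of $\dcat$ with generating subterminals of $\locrefl$. Realisation of types gives a coherent $M \colon \pretopos \to \locrefl$ together with $\beta \colon Q \Rightarrow M|_\dcat$. By the universal property of $\topos$, $M$ corresponds to a geometric morphism $s \colon \locrefl \to \topos$ with $s^*|_\pretopos \cong M$. Then $s^*h^*|_\dcat \cong M|_\dcat$, so $\beta$ is a natural transformation $\mathrm{id}^*|_\dcat \Rightarrow (hs)^*|_\dcat$.

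The step I expect to be the main obstacle is turning $\beta$ into a genuine geometric transformation between $\mathrm{id}_\locrefl$ and $hs$. This needs fully faithfulness of the correspondence $\mathbf{Geom}(\ftopos,\locrefl) \simeq \mathbf{Coh}(\dcat,\ftopos)$ on $2$-cells. That holds because $\locrefl = \Sh(\dcat, J_{\mathrm{coh}})$ is the classifying topos of $\dcat$ (as recalled above, from \cite[\S A4.6]{elephant} and the classifying-topos theory of \cite{makkai_reyes}), and natural transformations between flat continuous functors extend uniquely to natural transformations between the induced inverse images by the Kan-extension description. Since both categories are posets (the objects of $\dcat$ are subterminals, so $\beta$ is just an inequality of opens), it is enough to observe that the extension preserves the order, which is immediate. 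This yields the lax section and completes the equivalence.
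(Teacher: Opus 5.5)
Your proof is correct and is essentially the paper's argument. For `only if' you specialise to $\ftopos = \locrefl$ with $Q$ the inclusion $\dcat \hookrightarrow \locrefl$ classifying $\mathrm{id}_\locrefl$, and for `if' you paste the lax section with the geometric morphism $\ftopos \to \locrefl$ classifying $Q$. Your appeal to the fact that $\mathbf{Geom}(\ftopos,\locrefl) \simeq \mathbf{Coh}(\dcat,\ftopos)$ is full on $2$-cells just makes explicit what the paper compresses into ``converting to classifying topoi''.
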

	\begin{proof}
		Recall that $\dcat$ embeds into its classifying topos $\locrefl$, and recall also that this inclusion $\dcat \hookrightarrow \locrefl$ corresponds, under the classifying property of $\locrefl$, to the identity morphism on $\locrefl$.  If $\locrefl$ realises types, then there must be a natural transformation
		\[\begin{tikzcd}
			\dcat & \\
			\pretopos & \locrefl,
			\arrow[hook, from=1-1, to=2-1]
			\arrow[""{name=0, anchor=center, inner sep=0}, hook, from=1-1, to=2-2]
			\arrow[dashed, from=2-1, to=2-2]
			\arrow["\Rightarrow"{marking, allow upside down}, draw=none, from=0, to=2-1]
		\end{tikzcd}\]
		which, via converting to classifying topoi, yields the desired lax splitting
		\[\begin{tikzcd}
			\locrefl & \\
			\topos & \locrefl.
			\arrow[""{name=0, anchor=center, inner sep=0}, equals, from=1-1, to=2-2]
			\arrow["h", from=2-1, to=1-1]
			\arrow[dashed, from=2-2, to=2-1]
			\arrow["\Rightarrow"{marking, allow upside down}, draw=none, from=0, to=2-1]
		\end{tikzcd}\]
		
		Now suppose we are given a lax section as above.  Then for any other topos $\ftopos$ and a coherent functor $\dcat \to \ftopos$, we take the corresponding geometric morphism $\ftopos \to \locrefl$ and then form the following pasting of geometric morphisms.
		\[\begin{tikzcd}
			\ftopos & \locrefl & \topos \\
			&& \locrefl
			\arrow[from=1-1, to=1-2]
			\arrow[from=1-2, to=1-3]
			\arrow[""{name=0, anchor=center, inner sep=0}, equals, from=1-2, to=2-3]
			\arrow["h", from=1-3, to=2-3]
			\arrow["\Rightarrow"{marking, allow upside down}, draw=none, from=0, to=1-3]
		\end{tikzcd}\]
		Now, converting back to the coherent functors that are being classified, we obtain a diagram of coherent functors
		\[\begin{tikzcd}
			\ftopos & \pretopos \\
			& \dcat
			\arrow[from=1-2, to=1-1]
			\arrow[""{name=0, anchor=center, inner sep=0}, from=2-2, to=1-1]
			\arrow[hook, from=2-2, to=1-2]
			\arrow["\Rightarrow"{marking, allow upside down}, draw=none, from=0, to=1-2]
		\end{tikzcd}\]
		as desired.
	\end{proof}
	Suddenly, the seemingly reasonable suggestion that we can continuously assign models extending a continuous assignment of types, for any coherent theory, becomes the rather more dubious assertion that the localic reflection of any coherent topos has a lax section.  This is the starting point of finding our counterexample.
	%%%%%%%%%%%%%%%%%%%%%%%%%%%%%%%%%%%%%%%%%%%%%%%%%%%%%%%%%%%%%%%%%%%%%%%%%%%%%
	\section{The counterexample}
	Consider the following category $\cat$ with all non-identity morphisms displayed.
	\[
	\begin{tikzcd}[sep = large]
		{A } & B & C \\
		D & E \\
		F
		\arrow["\kappa", from=1-1, to=1-2]
		\arrow["\lambda"', from=1-1, to=2-1]
		\arrow["{\mu \circ \kappa}", shift left = 1.5, from=1-1, to=2-2]
		\arrow["{\nu \circ \lambda}"', shift right = 1.5, from=1-1, to=2-2]
		\arrow[from=1-2, to=1-3]
		\arrow["\mu", from=1-2, to=2-2]
		\arrow["\nu"', from=2-1, to=2-2]
		\arrow[from=2-1, to=3-1]
	\end{tikzcd}
	\]
	In particular, the composites $\mu \circ \kappa$ and $\nu \circ \lambda$ are \emph{not} equal.  The topos of co-presheaves $[\cat,\Set]$ is coherent since $\cat$ is finite.  Phrased in another way, the topos $[\cat,\Set]$ classifies a coherent theory, namely the theory of flat-functors on $\cat\op$, which is indeed coherent since $\cat$ is finite (see the axiomatisation of this theory given \cref{df:flat_functors} or \cite[Theorem 2.1.11]{TST}).
	
	\begin{prop}\label{prop:C_is_cauchy_complete}
		The category $\cat$ is Cauchy-complete.
	\end{prop}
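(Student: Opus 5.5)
Cauchy-completeness of a small category means that every idempotent splits. So the plan is to show that the only idempotents in $\cat$ are identities, which trivially split. First I will pin down the category exactly. The objects are $A,B,C,D,E,F$. The non-identity morphisms are those displayed:
\[\kappa\colon A\to B,\quad \lambda\colon A\to D,\quad \mu\colon B\to E,\quad \nu\colon D\to E,\quad B\to C,\quad D\to F,\quad \mu\circ\kappa,\ \nu\circ\lambda\colon A\to E.\]
The composites $A\to B\to C$ and $A\to D\to F$ are presumably also present, as the unique morphisms $A\to C$ and $A\to F$; the picture suppresses them, but they are forced by composition. I will check that composition is well defined and associative. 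This is immediate, because every composable chain of non-identity arrows has length at most two.

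The key observation is that $\cat$ is \emph{directed}. There is a height function on objects:
\[A\mapsto 0,\qquad B,D\mapsto 1,\qquad C,E,F\mapsto 2.\]
Every non-identity morphism strictly increases height. In particular, $\cat$ has no non-identity endomorphisms: a morphism $X\to X$ cannot increase height, so it must be $1_X$.

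Now let $e\colon X\to X$ be an idempotent. By the observation above, $e=1_X$. An identity splits trivially, as $X\xrightarrow{1}X\xrightarrow{1}X$. Hence every idempotent in $\cat$ splits, and $\cat$ is Cauchy-complete.

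The only real obstacle is confirming from the diagram that $\cat$ has no hidden endomorphisms and no cycles. In particular, I need to be sure that no implicit morphisms were intended beyond the composites required for closure under composition. Once the hom-sets are fixed as above, the height argument settles everything.
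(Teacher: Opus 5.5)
Your argument is correct and is essentially the paper's proof, which just observes by direct inspection that $\cat$ has no non-identity idempotents. Your height function $A\mapsto 0$, $B,D\mapsto 1$, $C,E,F\mapsto 2$ makes that inspection explicit, shows there are no non-identity endomorphisms at all, and is unaffected by the undrawn composites $A\to C$ and $A\to F$, which you rightly include.
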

	\begin{proof}
		By direct inspection, there are no non-identity idempotents.
	\end{proof}
	\begin{coro}\label{coro:geom_morphs_are_functors}
		For any other category $\dcat$, a geometric morphism $[\dcat,\Set] \to [\cat,\Set]$ corresponds to a functor $\dcat \to \cat$.  Similarly, a transformation of geometric morphisms corresponds to a natural transformation of functors.
	\end{coro}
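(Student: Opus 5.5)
The plan is to derive this from the classical description of geometric morphisms between presheaf toposes, together with \cref{prop:C_is_cauchy_complete}. A geometric morphism $f \colon [\dcat,\Set] \to [\cat,\Set]$ has its inverse image $f^*$ preserving colimits and finite limits. Since $[\cat,\Set]$ is the free cocompletion of $\cat\op$ under the contravariant Yoneda embedding $y \colon \cat\op \to [\cat,\Set]$, $c \mapsto \cat(c,-)$, the functor $f^*$ is determined by the composite $f^* \circ y \colon \cat\op \to [\dcat,\Set]$. This composite is a flat functor, i.e.\ its extension preserves finite limits. Conversely, every flat functor $\cat\op \to [\dcat,\Set]$ gives a geometric morphism. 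Natural transformations between inverse images correspond to natural transformations between these restricted functors. This is the standard Diaconescu correspondence, and I will simply invoke it.

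\textbf{Step 1.} Reduce to points. A flat functor $\cat\op \to [\dcat,\Set]$ is the same as a functor $\dcat \to \mathrm{Flat}(\cat\op,\Set)$, since flatness is checked pointwise in presheaf toposes. So geometric morphisms $[\dcat,\Set]\to[\cat,\Set]$ correspond to functors $\dcat \to \mathrm{Pt}([\cat,\Set])$, and 2-cells correspond to natural transformations.

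\textbf{Step 2.} Identify the points. Points of $[\cat,\Set]$ are flat functors $\cat\op \to \Set$, equivalently filtered colimits of representables, i.e.\ ind-objects of $\cat\op$. So $\mathrm{Pt}([\cat,\Set]) \simeq \mathrm{Ind}(\cat\op)$. Since $\cat$ is finite, so is $\cat\op$. A filtered diagram in a finite category has only finitely many objects and morphisms available, so its colimit in $\mathrm{Ind}$ is a retract of a representable. Concretely, a finite category's filtered diagram is cofinally given by an idempotent at some object, so the colimit is a split idempotent. By \cref{prop:C_is_cauchy_complete}, $\cat\op$ is Cauchy-complete. Hence every ind-object is representable, and Yoneda gives $\mathrm{Ind}(\cat\op) \simeq \cat\op$. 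The inverse image of the point corresponding to $c$ is evaluation at $c$, and the covariance works out so that points correspond to objects of $\cat$ and natural transformations between points correspond to morphisms of $\cat$. I will need to check the variance carefully, since $\dcat$ and $\cat$ are both on the same side of the copresheaf construction.

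\textbf{Step 3.} Combine. Geometric morphisms become functors $\dcat \to \cat$, and transformations become natural transformations.

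\textbf{Main obstacle.} The hard part is Step 2: showing that every flat functor on a finite Cauchy-complete category is representable. I would argue as follows. Take the category of elements of a flat $F$, which is filtered. Because only finitely many morphisms exist, there is an object $(c,x)$ through which any finite diagram can be coned. In particular, every element factors through $x$, and any endomorphism $e$ of $c$ fixing $x$ can be taken idempotent by replacing it with a suitable power (finiteness). Cauchy-completeness forces $e = \mathrm{id}$. From this, $x$ exhibits $F$ as representable by $c$.
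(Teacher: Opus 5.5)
Your strategy is correct, and it is more self-contained than the paper's. The paper proves the corollary purely by citation: Cauchy-completeness of $\cat$ (\cref{prop:C_is_cauchy_complete}) together with SGA~4, Expos\'e VI, Exercice 3.12 and Elephant Lemma A4.1.5, i.e.\ the standard classification of geometric morphisms between presheaf toposes by functors into a Cauchy-complete category. You instead use Diaconescu and pointwise flatness to identify geometric morphisms $[\dcat,\Set]\to[\cat,\Set]$ and their 2-cells with functors $\dcat\to\mathrm{Flat}(\cat\op,\Set)$ and natural transformations. That reduces everything, including the 2-cell clause, to one lemma: $c\mapsto\cat(-,c)$ is an equivalence $\cat\simeq\mathrm{Flat}(\cat\op,\Set)$. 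This shows exactly where the finiteness of $\cat$ is used, which the paper's one-line proof leaves implicit. The cost is that you must prove that lemma yourself, and your sketch of it needs three repairs.

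First, the variance. A flat $F\colon\cat\op\to\Set$ is a filtered colimit of the representables $\cat(-,c)$, so $\mathrm{Pt}([\cat,\Set])\simeq\mathrm{Ind}(\cat)$, not $\mathrm{Ind}(\cat\op)$. Taken literally, your $\mathrm{Ind}(\cat\op)\simeq\cat\op$ would produce functors $\dcat\to\cat\op$.

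Second, ``only finitely many morphisms exist'' holds for $\cat$ but not a priori for $\int F$, since $F$ could have infinitely many elements. So a single $(c,x)$ through which every element factors does not yet follow. Argue instead as follows. The image of each $\hat{x}\colon\cat(-,c)\to F$ has at most $|\mathrm{Mor}(\cat)|$ elements. Filteredness of $\int F$ makes these images a directed family of subfunctors whose union is $F$. Hence $F$ is finite and some $\hat{x}$ is surjective.

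Third, Cauchy-completeness says that idempotents split, not that they are identities. If $\cat$ had a non-identity split idempotent, your step ``$e=\mathrm{id}$'' would fail, and you would only get $F$ as a retract of $\cat(-,c)$. It would then be representable by the splitting object, not by $c$. What you actually need is the fact checked in the proof of \cref{prop:C_is_cauchy_complete}: $\cat$ has no non-identity idempotents, and indeed no non-identity endomorphisms at all. With that, the injectivity of $\hat{x}$, which your last sentence skips, is immediate. Suppose $F(f)(x)=F(g)(x)$ for $f,g\colon a\to c$. Filteredness gives $h\colon(c,x)\to(c',x')$ with $hf=hg$, and surjectivity of $\hat{x}$ gives $k\colon(c',x')\to(c,x)$. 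Then $kh=\mathrm{id}_c$, so $f=g$ and $\hat{x}\colon\cat(-,c)\cong F$.
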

	\begin{proof}
		Using \cref{prop:C_is_cauchy_complete} and \cite[Expos\'e VI, Exercice 3.12]{SGA} (cf.\ also \cite[Lemma A4.1.5]{elephant}).
	\end{proof}
	By \cite[Example C2.3.20]{elephant}, the localic reflection of $[\cat,\Set]$ is given by the topos $[P\cat,\Set]$ of co-presheaves on the \emph{posetal reflection} $P\cat$ of $\cat$, that is, the following poset, displayed as a category with all non-identity arrows.
	\[\begin{tikzcd}
		{A } & B & C \\
		D & E \\
		F
		\arrow[from=1-1, to=1-2]
		\arrow[from=1-1, to=2-1]
		\arrow[from=1-1, to=2-2]
		\arrow[from=1-2, to=1-3]
		\arrow[from=1-2, to=2-2]
		\arrow[from=2-1, to=2-2]
		\arrow[from=2-1, to=3-1]
	\end{tikzcd}\]
	Note that the canonical hyperconnected morphism $h \colon [\cat,\Set] \to [P\cat,\Set]$ of the localic reflection is witnessed by the canonical functor $\eta \colon \cat \to P\cat$ of the posetal reflection.  This also describes the action of $h$ on points of $[\cat,\Set]$ (under \cref{coro:geom_morphs_are_functors}).
	
	Thus, by \cref{coro:geom_morphs_are_functors}, to find a lax section of $h \colon [\cat,\Set] \to [P\cat,\Set]$, we must find a functor $\theta \colon P \cat \to \cat$ and a natural transformation as below.
	\[\begin{tikzcd}
		{P \cat} & \cat \\
		& {P \cat}
		\arrow["\theta", from=1-1, to=1-2]
		\arrow[""{name=0, anchor=center, inner sep=0}, equals, from=1-1, to=2-2]
		\arrow["\eta", from=1-2, to=2-2]
		\arrow["\Rightarrow"{marking, allow upside down}, draw=none, from=0, to=1-2]
	\end{tikzcd}\]
	In the context of posets, the existence of the required natural transformation expresses that $X \leqslant \eta \theta(X) $ for all $X \in P\cat$.
	\begin{thm}\label{thm:counterexample}
		There is no functor $\theta \colon P\cat \to \cat$ such that $X \leqslant \eta \theta (X)$.
	\end{thm}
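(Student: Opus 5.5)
The plan is to argue by contradiction: assume a functor $\theta \colon P\cat \to \cat$ with $X \leqslant \eta\theta(X)$ for all $X$, show that $\theta$ is forced to be the identity on objects, and then show that functoriality fails on the morphisms into $E$. I read the order on $P\cat$ as $X \leqslant Y$ precisely when there is an arrow $X \to Y$, so that $A \leqslant B \leqslant C$, $A \leqslant D \leqslant F$, $B \leqslant E$ and $D \leqslant E$.

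\emph{Step 1: the maximal elements are fixed.} The elements $C$, $E$ and $F$ are maximal in $P\cat$. The condition $X \leqslant \eta\theta(X)$ therefore forces $\eta\theta(C) = C$, $\eta\theta(E)=E$ and $\eta\theta(F)=F$. Since $\eta$ is the identity on objects, this gives $\theta(C)=C$, $\theta(E)=E$ and $\theta(F)=F$.

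\emph{Step 2: the remaining objects are fixed.} I would propagate downwards using the arrows of $P\cat$.
\begin{enumerate}
\item Since $B \leqslant C$ and $B \leqslant E$, the object $\theta(B)$ must admit morphisms in $\cat$ to $\theta(C)=C$ and to $\theta(E)=E$. By inspection of $\cat$, the only objects with arrows to both $C$ and $E$ are $A$ and $B$. The condition $B \leqslant \theta(B)$ excludes $A$, so $\theta(B)=B$.
\item Symmetrically, $\theta(D)$ must map to both $E$ and $F$, which forces $\theta(D)=D$.
\item Finally, $\theta(A)$ must map to both $\theta(B)=B$ and $\theta(D)=D$. Only $A$ does so, hence $\theta(A)=A$.
\end{enumerate}

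\emph{Step 3: contradiction on morphisms.} In $P\cat$ the arrow $A \leqslant E$ equals both composites $(B\leqslant E)\circ(A\leqslant B)$ and $(D\leqslant E)\circ(A \leqslant D)$. The hom-sets $\cat(A,B)$, $\cat(B,E)$, $\cat(A,D)$ and $\cat(D,E)$ are singletons $\{\kappa\}$, $\{\mu\}$, $\{\lambda\}$ and $\{\nu\}$ respectively. Functoriality of $\theta$ would therefore give
\[ \mu\circ\kappa = \theta(A \leqslant E) = \nu \circ \lambda, \]
contradicting the defining property of $\cat$ that these composites are distinct.

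I do not expect a genuine obstacle here. The only care needed is in Step 2: I must correctly enumerate which objects of $\cat$ admit arrows to a given pair of targets. I would do this by reading off the displayed diagram of $\cat$ directly.
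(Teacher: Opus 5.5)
Your proof is correct and follows essentially the same approach as the paper. You fix the maximal objects $C,E,F$, propagate downwards to show that $\theta$ is the identity on objects, and then derive $\mu\circ\kappa=\nu\circ\lambda$ from functoriality on $A\leqslant E$; the only cosmetic difference is that your Step 2 enumerates objects of $\cat$ with arrows to both targets, whereas the paper phrases it as $\eta\theta(D)$ being a lower bound of $E$ and $F$ in $P\cat$ that is at least $D$.
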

	\begin{proof}
		Suppose such a functor $\theta \colon P \cat \to \cat$ did exist.  Since $C,E,F$ are maximal elements of $P \cat$, we must have $C = \eta \theta (C)$, $E = \eta \theta (E)$ and $F = \eta \theta (F)$.  Next, $\eta \theta (D)$ must be a lower bound to $\eta \theta (F)$ and $\eta \theta (E)$ by functoriality, and also larger than or equal to $D$, which forces $\eta \theta (D) = D$.  Similarly, we deduce that $\eta \theta (B) = B$ and $\eta \theta (A) = A$.  Hence $\eta \theta$ acts as the identity on objects, and so this must also be true of $\theta$.
		
		For most of the arrows of $P \cat$ there is exactly one choice for where $\theta$ must send them. Consequently, for functoriality we must have
		\begin{align*}
			\mu \circ \kappa &= \theta (B \to E) \circ \theta (A \to B) \\
			& = \theta (A \to B \to E) = \theta (A \to E) = \theta (A \to D \to E) \\
			& = \theta (D \to E) \circ \theta (A \to D) = \nu \circ \lambda,
		\end{align*}
		which is a contradiction. Thus the identity map on objects cannot be extended to a functor, as claimed.
	\end{proof}
	\paragraph*{The counterexample topologically.}
	To conclude, we give a second exposition of the same counterexample, but in the concrete topological terms of `continuously realising types' as narrated in the introduction, for the benefit of the reader unfamiliar with the techniques of topos theory; that is to say, we explicitly describe the theory of flat functors on $\cat\op$, and its corresponding topological space of types.
	
	The following is an equivalent axiomatisation of the theory of flat functors on $\cat\op$, adapted from, for instance, \cite[Theorem 2.1.11]{TST}.
	\begin{df}\label{df:flat_functors}
		We denote by $\theory_{\cat\op}$ the following coherent theory.
		\begin{enumerate}
			\item The language of our theory $\theory_{\cat\op}$ has six sorts, $A, B, \dots , F$, and six function symbols:
			\begin{align*}
				\kappa & \colon B \to A , & \mu & \colon D \to A, & \pi & \colon C \to B , \\
				\lambda & \colon D \to A, & \nu & \colon E \to D,& \xi & \colon F \to D.
			\end{align*}
			\item We add an axiom $\top \vdash (\exists \, a : A )\lor (\exists \, b : B ) \lor \dots \lor (\exists \, f : F)$, expressing that one of the sorts, $A, B, \dots , F$, is non-empty.
			\item We add an axiom $b_1, b_2 : B \vdash b_1 = b_2$, expressing that the sort $B$ has at most one element.  We add the same axiom for the sorts $C, D, E, F$ (but not the sort $A$).
			\item We add the axioms $a_1, a_2 : A \vdash a_1 = a_2 \lor \exists\, e : E .(\kappa(\mu(e)) = a_1 \land \lambda(\nu(e)) = a_2)$ and $\exists\, e: E . (\kappa(\mu(e))=\lambda(\nu(e))) \vdash \bot$ expressing that the sort $A$ either has at most one element, or the sort $E$ contains a (necessarily unique) element $e$ and $A$ consists precisely of the two distinct images $\kappa(\mu(e))$ and $\lambda(\nu(e))$.
			\item Finally, we add an axiom $(\exists \, b : B) \land (\exists \, d : D) \vdash \exists \, e : E$, expressing that if the sorts $B$ and $D$ are non-empty then so too is $E$, and the axioms
			\[
			(\exists \, c : C)\land (\exists \, e : E) \vdash \bot, \quad (\exists\, c : C) \land (\exists \, f : F) \vdash \bot, \quad (\exists \, e : E) \land (\exists \, f : F) \vdash \bot,
			\]
			expressing that, between $C, E, F$, at most one of these sorts is non-empty.
		\end{enumerate}
	\end{df}
	It is not difficult to write down all the possible models for this theory (there are six up to isomorphism); for instance, one model interprets the sorts as: $A = \{a_1,a_2\}$, $B=\{b\}$, $C=\emptyset$, $D=\{d\}$, $E=\{e\}$, $F = \emptyset$.  Each of these six models corresponds to one of the six distinct types for the theory, which are all \emph{supported} in the sense of \cite[Definition 3.1.2]{kamsma_txtbk}; namely, the types are $\langle \exists\, a : A \rangle, \dots , \langle \exists\, f : F \rangle $ (where $\langle \phi \rangle $ denotes $ \{ \psi \in \dcat \mid \phi \vdash \psi \}$, i.e.\ the set of sentences entailed by $\phi$).  For notational simplicity, we will use $\underline{a}$ to denote the type $\langle \exists \, a : A\rangle$ viewed as a point its space of types.  The space of types for $\theory_{\cat\op}$ is given by the following topological space $X$, where all non-empty opens are displayed.
	\begin{center}
		\begin{tikzpicture}
			%nodes
			\node (A) at (2,0) {$\underline{a}$};
			\node (B) at (1,0) {$\underline{b}$};
			\node (C) at (0,0) {$\underline{c}$};
			\node (D) at (2,1) {$\underline{d}$};
			\node (E) at (1,1) {$\underline{e}$};
			\node (F) at (2,2) {$\underline{f}$};
			%opens at C, E and F
			\draw (C) circle (0.3);
			\draw (E) circle (0.3);
			\draw (F) circle (0.3);
			%open EDF
			\draw
			($(D)+(0.4,0)$) -- ($(F) + (0.4,0)$) arc (0:135:0.4) -- ($(E)+(135:0.4)$) arc (135:270:0.4) -- ($(D)+(0,-.4)$) arc (270:360:0.4);
			%open CBE
			\draw[color = white, line width = 5]
			($(B)+(0.5,0)$) -- ($(E) + (0.5,0)$) arc (0:135:0.5) -- ($(C)+(135:0.5)$) arc (135:270:0.5) -- ($(B)+(0,-.5)$) arc (270:360:0.5);
			\draw
			($(B)+(0.5,0)$) -- ($(E) + (0.5,0)$) arc (0:135:0.5) -- ($(C)+(135:0.5)$) arc (135:270:0.5) -- ($(B)+(0,-.5)$) arc (270:360:0.5);
			%open all
			\draw
			($(A)+(0.6,0)$) -- ($(F) + (0.6,0)$) arc (0:135:0.6) -- ($(C)+(135:0.6)$) arc (135:270:0.6) -- ($(A)+(0,-.6)$) arc (270:360:0.6);
		\end{tikzpicture}
	\end{center}
	Note that the topos $[P \cat, \Set]$ is precisely the topos of sheaves $\Sh(X)$ over this space.
	
	Recall that, by a \emph{sheaf model} of $\theory_{\cat\op}$ over $X$, we mean a system consisting of a local homeomorphism for each sort, $q_A \colon U_A \to X$, etc., and a morphism of local homeomorphisms for each function symbol, i.e.\ a continuous map $q_\kappa \colon U_B \to U_A$ such that $q_A \circ q_\kappa = q_B$, etc., such that, for each point $x \in X$, the fibres over $x$ describe a model of $\theory_{\cat\op}$.  \cref{thm:counterexample} demonstrates that there is no sheaf model of $\theory_{\cat\op}$ over the space $X$ such that model over the point $\underline{a}$ realises $\exists \, a : A$, etc.  This can also be proved using a direct topological argument using the concrete descriptions of $\theory_{\cat\op}$ and $X$ provided above; it is a simple exercise in topology, which we encourage the reader to attempt.  We briefly outline a potential strategy below.
	\begin{enumerate}
		\item Suppose that such a sheaf model did exist.  First, using the fact that each sort must be interpreted by a local homeomorphism, show that the fibre over a point $x \in X$ must be the model that \emph{strictly} realises the corresponding type, in the sense that the set of sentences true in the model is precisely equal to the prime filter described by the type (cf.\ \cref{df:realise}), i.e.\ the model over $\underline{c}$ has $A , B, C$ as singleton sets and $E , D, F$ empty, etc.
		\item Deduce that, as a topological space, the space $U_A$ interpreting the sort $A$ must be homeomorphic to the disjoint union $X + \{\underline{e}\}$.
		\item Then show that there is a unique choice of commuting continuous maps $q_\xi \colon U_F \to U_D$, etc., for each function symbol.  Finally, observe that these unique choices satisfy $q_\kappa \circ q_\mu = q_\lambda \circ q_\nu$, and so the constructed system of local homeomorphisms is not a model of $\theory_{\cat\op}$.
	\end{enumerate}
	\subsubsection*{Acknowledgements}
	The second author acknowledges the support from Marie Sk{\l}odowska-Curie Grant No.\ 101273434.
	
	\smallskip
	\rightline{\includegraphics[height=1.0cm]{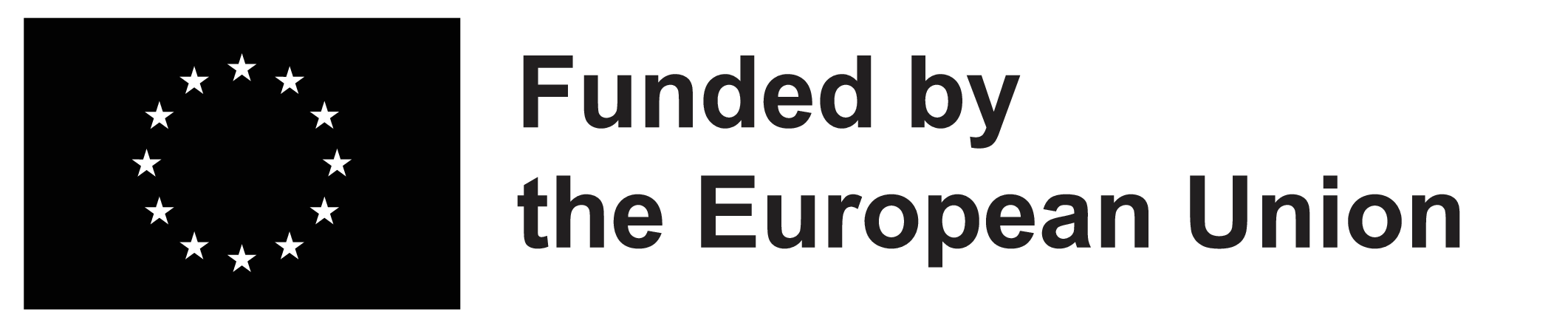}}
	
	%References
	\printbibliography
\end{document}